\DeclareMathOperator{\RE}{Re}
\numberwithin{equation}{section}
\newtheorem{theorem}{Theorem}[section]
\newtheorem{lemma}[theorem]{Lemma}
\newcommand{\subjclassname@later}{\textup{2010} Mathematics Subject Classification}
\begin{document}
\title[Radii of Starlikeness ]{Radii of Starlikeness Associated with the Lemniscate  of Bernoulli and the Left-Half Plane}

\author[R. M. Ali]{Rosihan M. Ali}
\address{School of Mathematical Sciences, Universiti Sains
Malaysia, 11800 USM Penang, Malaysia} \email{rosihan@cs.usm.my}

\author[N. Jain]{Naveen Jain}
\address{Department of Mathematics, University of Delhi,
Delhi-110007, India}
\email{naveenjain05@gmail.com}

\author{V. Ravichandran}
\address{Department of Mathematics, University of Delhi,
Delhi-110007, India} \email{vravi@maths.du.ac.in}

\subjclass[later]{Primary 30C80; Secondary 30C45} \keywords{Starlike functions, radius of starlikeness, lemniscate of Bernoulli.}

\thanks{The work presented here was supported in part by grants from Universiti Sains Malaysia and University of Delhi.}

\begin{abstract} A normalized analytic function $f$  defined on the open unit disk in the complex
plane is  in the class $\mathcal{SL}$  if $zf'(z)/f(z)$ lies in the region bounded by the right-half of the lemniscate of Bernoulli given by
$|w^2-1|<1$. In the present investigation, the $\mathcal{SL}$-radii for certain well-known classes of functions are obtained. Radius problems
associated with the left-half plane are also investigated for these classes.
\end{abstract}

\maketitle

\section{Introduction}Let $\mathcal{A}_n$ denote the class of analytic functions  in the unit disk
$\mathbb{D}:= \{z:\  |z| <1\}$ of the form $f(z)=z+\sum_{k=n+1}a_k z^k$, and let $\mathcal{A}:=\mathcal{A}_1$. Let $\mathcal{S}$ denote the
subclass of $\mathcal{A}$ consisting of  univalent functions. Let $ \mathcal{SL}$ be the class of functions defined by
\[
\mathcal{SL}:=\left\{f\in \mathcal{A}:\left|\left(\frac{zf'(z)}{f(z)}\right)^2-1\right|<1\right\}\quad (z\in \mathbb{D}).
\]
Thus a function $f\in \mathcal{SL}$  if $zf'(z)/f(z)$ lies in the region bounded by the right-half of the lemniscate of Bernoulli given by
$|w^2-1|<1$. For two functions $f$ and $g$  analytic in $\mathbb{D}$, the function $f$ is said to be \emph{subordinate} to $g$, written
$f(z)\prec g(z) \quad (z\in \mathbb{D})$, if there exists a function $w$ analytic in $\mathbb{D}$ with $ w(0) = 0$ and $|w(z)|<1$ such that
$f(z)=g(w(z)).$ In particular, if the function $g$ is univalent in $\mathbb{D}$, then $f(z)\prec g(z)$ is equivalent to $f(0) =g(0)$ and
$f(\mathbb{D})\subset g(\mathbb{D})$. In terms of subordination, the class $\mathcal{SL}$ consists of normalized analytic functions $f$
satisfying $zf'(z)/f(z) \prec \sqrt{1+z}$.  This class $ \mathcal{SL}$  was introduced by  Sok\'o\l\ and Stankiewicz \cite{sokol96}.  Paprocki\
and Sok\'o\l \cite{sokol96b} discussed a more general class $\mathcal{S}^*(a,b)$ consisting of normalized analytic  functions $f$ satisfying
$\left|[zf'(z)/f(z)]^a-b\right|<b$, $b\geq\frac12$, $a\geq 1$.

Recall that a  function $f\in \mathcal{A}$ is starlike if $f(\mathbb{D})$ is starlike with respect to $0$.    Similarly, a function $f\in
\mathcal{A}$  is convex if $f(\mathbb{D})$ is convex. Analytically, a function $f\in\mathcal{A}$ is starlike or convex if the  following
respective subordinations hold:
\[ \frac{zf'(z)}{f(z)}\prec \frac{1+z}{1-z}, \quad \text{ or}   \quad 1+ \frac{zf''(z)}{f'(z)}\prec  \frac{1+z}{1-z}  . \]

Ma and Minda  \cite{mamin2} gave a unified presentation of various subclasses of starlike and convex functions by replacing the superordinate
function $(1+z)/(1-z)$ by a more  general function $\varphi$.     They considered  analytic functions  $\varphi$  with   positive real part that
map the unit disk $\mathbb{D}$ onto regions starlike with respect to 1, symmetric with respect to the real axis and normalized by
$\varphi(0)=1$. They  introduced the following classes that include several well-known classes as special cases:
\[
 \mathcal{ST} (\varphi) := \left\{f\in \mathcal{A} \left |\  \frac{zf'(z)}{f(z)}\prec \varphi(z)\right \}\right.
\]
and
\[
\mathcal{CV}(\varphi) := \left\{f\in \mathcal{A}\left|\ 1+ \frac{zf''(z)}{f'(z)}\prec \varphi(z)\right \}\right..
\] For $0 \leq \alpha<1$,
\[
\mathcal{ST}(\alpha):=\mathcal{ST}((1+(1-2\alpha)z)/(1-z)),\quad
 \mathcal{CV}(\alpha):=\mathcal{CV}((1+(1-2\alpha)z)/(1-z))
 \]
 are  the subclasses of $\mathcal{S}$ consisting of starlike and convex  functions
of order $\alpha$ in $\mathbb{D}$ respectively.  Then $\mathcal{ST}:=\mathcal{ST}(0)$,
$\mathcal{CV}:=\mathcal{CV}(0)$ are the well-known classes of starlike and convex  functions
  respectively. Also let
  \[ \mathcal{ST}_n(\alpha) := \mathcal{A}_n \cap \mathcal{ST}(\alpha), \quad \mathcal{CV}_n(\alpha): = \mathcal{A}_n \cap \mathcal{CV}(\alpha), \quad
  \mathcal{SL}_n := \mathcal{A}_n \cap \mathcal{SL}. \]

\noindent Since $\mathcal{SL}=\mathcal{ST}(\sqrt{1+z})$, distortion, growth, and rotation results for the class $\mathcal{SL}$ can conveniently
be obtained by applying the corresponding results in \cite{mamin2}.

The radius of a property $P$ in a set of functions $\mathcal{M}$, denoted by $R_P(\mathcal{M})$, is the largest number $R$ such that every
function in the set $\mathcal{M}$ has the property $P$ in each disk $\mathbb{D}_r=\{z\in\mathbb{D} : |z|<r \}$ for every $r<R$. For example, the
radius of convexity in the class $\mathcal{S}$ is $2-\sqrt{3}$. Sok\'o\l\ and Stankiewicz \cite{sokol96} determined the radius of convexity for
functions in the class $\mathcal{SL}$.  They have also obtained structural formula, growth and distortion theorems for these functions.
Estimates for the first few coefficients of functions in this class can be found in \cite{sokol09}.  Recently,   Sok\'o\l \ \cite{sokol09b}
determined
 various  radii for functions belonging to the class $\mathcal{SL}$; these include the radii of convexity,
 starlikeness and   strong starlikeness of order $\alpha$. In contrast, in our present investigation, we
 compute the $\mathcal{SL}$-radius for functions belonging to several interesting classes. Unlike the radii problems associated
 with starlikeness and convexity, where a central feature is the estimates for the real part of the expressions $zf'(z)/f(z)$ or $1+zf''(z)/f'(z)$
 respectively, the $\mathcal{SL}$-radius problems for classes of functions are tackled by first finding the disk
 that contains the values of $zf'(z)/f(z)$ or $1+zf''(z)/f'(z)$. This technical result will be presented in the next section.

Another interesting class is  $ \mathcal{M}(\beta)$, $\beta < 1$, defined by
\[ \mathcal{M}(\beta):=\left\{ f\in\mathcal{A}:
\RE\left( \frac{zf'(z)}{f(z)}  \right) < \beta,\quad z\in\Delta\right\}. \]   The class $ \mathcal{M}(\beta)$ was investigated by Uralegaddi
{\em et al.} \cite{ural}, while its subclass was investigated by Owa and Srivastava \cite{owa2}. We let $\mathcal{M}_n(\beta) := \mathcal{A}_n
\cap \mathcal{M}(\beta)$.
In the present paper, radius problems related to $ \mathcal{M}(\beta)$ will also be investigated. Related radius problem for this class can be found in \cite{rmavr} and \cite{vrkgs}. The following
definitions and results will be required.

 An analytic  function $p(z)=1+c_nz^n+\cdots$  is a function with positive real part if
$\RE p(z)>0$. The class of all such functions is denoted by $\mathcal{P}_n$. We also denote the subclass of $\mathcal{P}_n$ satisfying $\RE
p(z)>\alpha$, $0\leq \alpha <1$, by $\mathcal{P}_n(\alpha)$.  More generally, for $-1\leq B <A \leq 1$, the class $\mathcal{P}_n[A,B]$ consists
of  functions $p$ of the form $p(z)=1+c_nz^n+\cdots$ satisfying
\[p(z) \prec \frac{1+Az}{1+Bz}. \]

\begin{lemma} \cite{mac1}\label{lem2} If $p\in \mathcal{P}_n$, then
\[ \left|\frac{zp'(z)}{p(z)}\right| \leq \frac{2nr^{n}}{1-r^{2n}}  \quad (|z|=r<1). \]
\end{lemma}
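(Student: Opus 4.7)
The plan is to reduce the estimate to Schwarz's lemma followed by a one-variable inequality. Because $\RE p(z) > 0$, the function
\[w(z) := \frac{p(z)-1}{p(z)+1}\]
is analytic in $\mathbb{D}$ with $|w(z)|<1$, and the expansion $p(z)=1+c_nz^n+\cdots$ forces $w$ to have a zero of order at least $n$ at the origin. Applying Schwarz's lemma to $w(z)/z^n$ yields both $|w(z)|\leq|z|^n$ and the fact that $\phi(z):=w(z)/z^n$ maps $\mathbb{D}$ into $\overline{\mathbb{D}}$.

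A direct calculation from $p=(1+w)/(1-w)$ gives the identity
\[\frac{zp'(z)}{p(z)}=\frac{2zw'(z)}{1-w(z)^2}.\]
Substituting $w=z^n\phi$ and applying the Schwarz--Pick inequality $(1-|z|^2)|\phi'(z)|\leq 1-|\phi(z)|^2$ to the self-map $\phi$ produces
\[|zw'(z)|\leq nr^n|\phi(z)|+\frac{r^{n+1}\bigl(1-|\phi(z)|^2\bigr)}{1-r^2},\]
while $|1-w(z)^2|\geq 1-|w(z)|^2=1-r^{2n}|\phi(z)|^2$ handles the denominator. Writing $s:=|\phi(z)|\in[0,1]$ and combining these two estimates, the lemma reduces to the algebraic inequality
\[nr^ns+\frac{r^{n+1}(1-s^2)}{1-r^2}\leq\frac{nr^n\bigl(1-r^{2n}s^2\bigr)}{1-r^{2n}}.\]

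Clearing denominators, using $1-r^{2n}=(1-r^2)\sum_{k=0}^{n-1}r^{2k}$, and extracting the nonnegative factor $(1-s)$ transforms this into
\[n\bigl(1+sr^{2n}\bigr)\geq r(1+s)\sum_{k=0}^{n-1}r^{2k}.\]
As a function of $s$, the difference of the two sides is affine with non-positive slope (since $r^{2k+1}\geq r^{2n}$ for every $k\in\{0,\ldots,n-1\}$), so the worst case is $s=1$, where the inequality reads $n(1+r^{2n})\geq 2\sum_{k=0}^{n-1}r^{2k+1}$. Pairing the $k$-th and $(n-1-k)$-th terms on the right and invoking AM--GM in the form $r^{2k+1}+r^{2n-2k-1}\geq 2r^n$ (together with $1+r^{2n}\geq 2r^n$ for the unpaired middle term when $n$ is odd) completes the verification.

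The main obstacle is precisely this concluding algebraic step: the Schwarz--Pick bounds used above are individually loose, and some care is required to confirm that their combination is sharp enough to produce the constant $2nr^n/(1-r^{2n})$. Sharpness is witnessed by the extremal function $p(z)=(1+z^n)/(1-z^n)$ at $z=r$, for which $w(z)=z^n$ and $\phi\equiv 1$ make every intermediate inequality an equality.
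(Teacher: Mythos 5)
The paper does not actually prove this lemma: it is quoted verbatim from MacGregor's 1963 paper, so there is no internal proof to compare against. Your route (Cayley transform $w=(p-1)/(p+1)$, peeling off the zero of order $n$, Schwarz--Pick for $\phi=w/z^n$, and reduction to a scalar inequality in $s=|\phi(z)|$) is essentially the standard argument for this bound. Everything up to and including the reduction to
\[
n\bigl(1+sr^{2n}\bigr)\geq r(1+s)\sum_{k=0}^{n-1}r^{2k}
\]
and the observation that the left-minus-right difference is affine and non-increasing in $s$, so that $s=1$ is the worst case, is correct; the sharpness discussion via $p(z)=(1+z^n)/(1-z^n)$ is also correct.

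The flaw is in the very last step, which you yourself flag as the main obstacle. At $s=1$ you must prove the \emph{upper} bound $2\sum_{k=0}^{n-1}r^{2k+1}\leq n(1+r^{2n})$, but AM--GM applied to a pair, $r^{2k+1}+r^{2n-2k-1}\geq 2r^{n}$, bounds that pair from \emph{below}; combined with $1+r^{2n}\geq 2r^{n}$ it only shows that both sides of the target inequality exceed $2nr^{n}$, which decides nothing about their order. (The argument happens to close for $n=1$, where the single term is the ``unpaired middle term,'' but it establishes nothing for $n\geq 2$.) The inequality is true and the pairing idea is the right one; the correct justification for each pair is the factorization
\[
1+r^{2n}-r^{2k+1}-r^{2n-2k-1}=\bigl(1-r^{2k+1}\bigr)\bigl(1-r^{2n-2k-1}\bigr)\geq 0 ,
\]
summed over $k=0,\dots,n-1$ (each pair is then counted twice, yielding exactly $n(1+r^{2n})\geq 2\sum_{k=0}^{n-1}r^{2k+1}$). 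With that one-line replacement your proof is complete.
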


\begin{lemma} \cite{vraviron} \label{lem3} If $p\in P_n[A,B]$, then
 \[\left|p(z)-\frac{1-ABr^{2n}}{1-B^2r^{2n}}\right|\leq\frac{(A-B)r^n}{1-B^2r^{2n}}\quad (|z| = r  < 1).\]
In particular, if $p\in P_n(\alpha)$, then \[
 \left|p(z) - \frac{1+(1-2\alpha)r^{2n}}{1-r^{2n}} \right| \leq \frac{2(1-\alpha)r^n}{1-r^{2n}}
\quad (|z| = r  < 1).\]
\end{lemma}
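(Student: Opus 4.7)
The plan is to convert the subordination hypothesis into a concrete representation for $p$ and then read off the disk in which the values of $p$ lie.

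Since $(1+Az)/(1+Bz)$ is univalent in $\mathbb{D}$, the subordination $p(z) \prec (1+Az)/(1+Bz)$ yields an analytic Schwarz function $w$ with $w(0) = 0$ and $|w(z)| < 1$ such that $p(z) = (1 + Aw(z))/(1 + Bw(z))$. Matching the expansion $p(z) = 1 + c_n z^n + \cdots$ against $1 + (A-B)w(z) + \cdots$ forces $w(z) = O(z^n)$. The generalized Schwarz lemma, obtained by applying the maximum modulus principle to $w(z)/z^n$ on disks $|z| \leq \rho < 1$ and letting $\rho \to 1^-$, then gives $|w(z)| \leq r^n$ whenever $|z| = r < 1$.

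Next, with $r$ fixed, the value $p(z)$ lies in the image $\Omega_r$ of the closed disk $\{\zeta : |\zeta| \leq r^n\}$ under the Möbius transformation $\phi(\zeta) = (1+A\zeta)/(1+B\zeta)$. Since Möbius maps send disks to disks, $\Omega_r$ is itself a closed disk, and I would compute its center and radius from the midpoint and half-length of the image of the real diameter $[-r^n, r^n]$; the endpoints $\phi(\pm r^n) = (1 \pm Ar^n)/(1 \pm Br^n)$ are real because $A$ and $B$ are real, so straightforward algebra yields precisely the stated center $(1 - ABr^{2n})/(1 - B^2 r^{2n})$ and radius $(A-B)r^n/(1 - B^2 r^{2n})$. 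Feeding back $p(z) \in \Omega_r$ gives the first inequality.

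The ``in particular'' clause is then just the specialization $A = 1 - 2\alpha$, $B = -1$, which slots directly into the general formula. The only step that requires any care is the justification that $w(z) = O(z^n)$ from the normalization of $p$ (essentially comparing lowest-order nonzero terms in $p - 1$); after that, everything reduces to the elementary geometric fact that a Möbius map with real coefficients carries a centered real-axis disk to a disk with real center, which can be pinned down by evaluating at two real boundary points.
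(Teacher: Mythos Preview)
The paper does not actually prove this lemma: it is quoted from \cite{vraviron} and used as a black box. Your argument is correct and is essentially the standard derivation of this disk estimate --- reduce to a Schwarz function $w$ with an $n$-th order zero via the normalization of $p$, apply the generalized Schwarz lemma to get $|w(z)|\le r^n$, and then track the closed disk $\{|\zeta|\le r^n\}$ through the real-coefficient M\"obius map $\phi(\zeta)=(1+A\zeta)/(1+B\zeta)$. The one point worth making explicit is why $\phi([-r^n,r^n])$ really is the \emph{diameter} of the image disk: because $\phi$ has real coefficients it commutes with conjugation, so the image disk is symmetric about $\mathbb{R}$, hence its center is real and its real chord is a diameter; monotonicity of $\phi$ on $[-r^n,r^n]$ (from $\phi'=(A-B)/(1+B\zeta)^2>0$) then pins down the endpoints as $\phi(\pm r^n)$. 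With that in hand the midpoint/half-length computation and the specialization $A=1-2\alpha$, $B=-1$ are exactly as you describe.
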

\section{The $\mathcal{SL}_n$-Radius Problems}

In this section, three special classes of functions will be considered. First is the class
\[ \mathcal{S}_n := \left\{f\in\mathcal{A}_n: \frac{f(z)}{z} \in\mathcal{P}_n \right\}. \] For this class, we shall find its $\mathcal{SL}_n$-radius, denoted by
$R_{\mathcal{SL}_n}(\mathcal{S}_n)$.

\begin{theorem} \label{th3}
The $\mathcal{SL}_n$-radius for the class $\mathcal{S}_n$ is
\[
R_{\mathcal{SL}_n} (\mathcal{S}_n)=\left\{\frac{\sqrt{2}-1}{n+\sqrt{n^2+(\sqrt{2}-1)^2}}\right\}^{1/n}.
\]
This radius is sharp.
\end{theorem}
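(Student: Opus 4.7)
The plan is to exploit the fact that for $f\in\mathcal{S}_n$ the quotient $p(z):=f(z)/z$ lies in $\mathcal{P}_n$, so that
\[
\frac{zf'(z)}{f(z)} \;=\; 1 + \frac{zp'(z)}{p(z)}.
\]
Lemma \ref{lem2} then puts $zf'(z)/f(z)$ inside a closed disk centred at $1$ of radius $\rho(r):=2nr^n/(1-r^{2n})$. The problem therefore reduces to a purely geometric question: for which $\rho$ is the disk $\{w:|w-1|\le\rho\}$ contained in the right lemniscate region $\Omega:=\{w:|w^2-1|<1\}$?

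For the geometric step, I would write $|w^2-1|=|w-1|\,|w+1|$ and, parametrising $w=1+\rho e^{i\theta}$, observe that $|w+1|^2=4+4\rho\cos\theta+\rho^2$ attains its maximum $(2+\rho)^2$ at $\theta=0$. Hence $\max_{|w-1|\le\rho}|w^2-1|=\rho(2+\rho)$, so containment in $\overline{\Omega}$ is equivalent to $\rho(2+\rho)\le1$, i.e.\ $\rho\le\sqrt{2}-1$. This gives the key sufficient condition: $f\in\mathcal{SL}_n$ on $|z|<r$ whenever $2nr^n/(1-r^{2n})\le\sqrt{2}-1$.

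Setting $s=r^n$, the last inequality becomes $(\sqrt{2}-1)s^2+2ns-(\sqrt{2}-1)\le 0$. Solving the quadratic and rationalising the numerator yields
\[
s \;\le\; \frac{-n+\sqrt{n^2+(\sqrt{2}-1)^2}}{\sqrt{2}-1} \;=\; \frac{\sqrt{2}-1}{n+\sqrt{n^2+(\sqrt{2}-1)^2}},
\]
which, upon taking $n$th roots, produces the stated radius.

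For sharpness, I would take the extremal function $f(z)=z(1+z^n)/(1-z^n)\in\mathcal{S}_n$, which saturates Lemma \ref{lem2} since a direct computation gives $zp'(z)/p(z)=2nz^n/(1-z^{2n})$. At the boundary point $z=r_0$, where $r_0$ is the claimed radius, one has $2nr_0^n/(1-r_0^{2n})=\sqrt{2}-1$, so $zf'(z)/f(z)\big|_{z=r_0}=\sqrt{2}$ and $\bigl|(\sqrt{2})^2-1\bigr|=1$; thus $zf'/f$ touches the lemniscate, so the radius cannot be enlarged. The main obstacle is really just the geometric lemma about containment of a disk in the lemniscate region; once that is in hand, the rest is algebra and a direct extremal check.
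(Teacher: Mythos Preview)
Your proposal is correct and follows essentially the same approach as the paper: reduce via Lemma~\ref{lem2} to a disk $|w-1|\le\rho$, show this disk is contained in the lemniscate region exactly when $\rho\le\sqrt{2}-1$, solve the resulting quadratic in $r^n$, and verify sharpness with $f(z)=z(1+z^n)/(1-z^n)$. Your geometric step is in fact slightly more informative than the paper's, since you compute $\max_{|w-1|\le\rho}|w^2-1|=\rho(2+\rho)$ explicitly (yielding a necessary and sufficient condition), whereas the paper only notes that $|w-1|<\sqrt{2}-1$ forces $|w+1|\le\sqrt{2}+1$ by the triangle inequality and hence $|w^2-1|\le 1$.
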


\begin{proof} Define the function $h$ by
 \[ h(z)=\frac{f(z)}{z}. \]  Then the function $h\in\mathcal{P}_n$ and
  \[ \frac{zf'(z)}{f(z)}-1 = \frac{zh'(z)}{h(z)}. \]
Applying Lemma~\ref{lem2} to the function $h$ yields \[\left|\frac{zf'(z)}{f(z)}-1\right|\leq\frac{2nr^n}{1-r^{2n}}. \] Notice that if
$|w-1|<\sqrt{2}-1$, then $|w+1|\leq \sqrt{2}+1$ and hence $|w^2-1|\leq 1$. Thus the above disk lies inside the lemniscate $|w^2-1|<1$ if
\[ \frac{2nr^{n}}{1-r^{2n}}\leq \sqrt2-1. \] Solving this inequality for $r$ yields the desired $\mathcal{SL}_n$-radius for the class $\mathcal{S}_n$.

Now consider the function $f$ defined by
\[ f(z) = \frac{z+z^{n+1}}{1-z^n}. \] Clearly the function $f$ satisfies the hypothesis of the theorem and
\[ \frac{zf'(z)}{f(z)} = 1+ \frac{2nz^n}{1-z^{2n}}.\]
At $z=R$ where $R$ is the $\mathcal{SL}_n$-radius for the class $\mathcal{S}_n$  given in the theorem, routine computations show that
 \[\left|\left(\frac{zf'(z)}{f(z)}\right)^2-1\right|= 1. \]  This proves that the result is sharp.
\end{proof}

The following technical lemma will be useful in our subsequent investigations.

\begin{lemma}\label{lem25}  For $0 < a< \sqrt{2}$, let $r_a$ be given by
\[ r_a= \begin{cases} \left(\sqrt{1-a^2}-(1-a^2)\right)^{1/2} &\quad ( 0 <a \leq 2 \sqrt{2}/3) \\
\sqrt{2}-a & \quad (2 \sqrt{2}/3 \leq a <\sqrt{2}),
\end{cases} \] and for $a>0$, let $R_a$ be given by
\[ R_a= \begin{cases}\sqrt{2}-a &\quad ( 0 <a \leq 1/ \sqrt{2}) \\
a & \quad (1/\sqrt{2} \leq a ).
\end{cases} \]
 Then  \[ \{ w: |w-a|< r_a\} \subseteq \{w: |w^2-1|<1\}  \subseteq \{ w: |w-a|< R_a\} . \]
\end{lemma}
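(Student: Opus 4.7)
The lemma reduces to finding $\min$ and $\max$ of $|w-a|$ as $w$ ranges over the closed right lobe of the lemniscate $|w^2-1|=1$ (the component of $\{|w^2-1|\leq 1\}$ containing $1$); these extremes are exactly $r_a$ and $R_a$. My plan is to apply Lagrange multipliers after first recording the algebraic identity
\[
|w^2-1|^2 = (|w|^2+1)^2 - 4(\RE w)^2,
\]
which follows by direct expansion with $w=x+iy$ and rewrites the lemniscate as the level set $(x^2+y^2+1)^2=4x^2+1$.

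For the inner radius I would minimize $(x-a)^2+y^2$ subject to this constraint. The only real-axis points of the right lobe are $w=0$ and $w=\sqrt{2}$, giving candidate distances $a$ and $|\sqrt{2}-a|$. Off the real axis, eliminating the Lagrange multiplier reduces the stationarity conditions to $x^2+y^2=(2x-a)/a$, and combining with the lemniscate equation yields the single interior critical point $x=a/(2\sqrt{1-a^2})$ with squared distance $\sqrt{1-a^2}-(1-a^2)$. Writing $s=\sqrt{1-a^2}$, the feasibility condition $y^2\geq 0$ simplifies to $(1-s)(3s-1)\geq 0$, equivalently $a\leq 2\sqrt{2}/3$. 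Hence for $0<a\leq 2\sqrt{2}/3$ the interior critical point supplies the minimum $r_a=\sqrt{\sqrt{1-a^2}-(1-a^2)}$, while for $2\sqrt{2}/3\leq a<\sqrt{2}$ it disappears and the minimum is $\sqrt{2}-a$, attained at $w=\sqrt{2}$. Both expressions agree at the transition $a=2\sqrt{2}/3$ (both evaluating to $\sqrt{2}/3$), confirming continuity.

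For the outer radius, the same Lagrange analysis furnishes only the interior critical point already shown to be a minimum, so the maximum of $|w-a|$ over the closed lobe must be attained at one of its real-axis vertices $w=0$ or $w=\sqrt{2}$. Comparing the distances $a$ and $|\sqrt{2}-a|$ yields $R_a=\sqrt{2}-a$ for $0<a\leq 1/\sqrt{2}$ and $R_a=a$ for $a\geq 1/\sqrt{2}$. The main obstacle will be the careful case-analysis at the thresholds $a=2\sqrt{2}/3$ and $a=1/\sqrt{2}$, together with verifying that the off-axis Lagrange critical point is genuinely a minimum rather than a saddle. A useful sanity check ensuring the inner disk never reaches the node $w=0$ is $r_a<a$: from $r_a^2=s(1-s)$ and $a^2=(1-s)(1+s)$ one obtains $r_a^2/a^2=s/(1+s)<1$.
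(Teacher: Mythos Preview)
Your argument is correct and arrives at exactly the same critical points and case split as the paper, but by a different route. The paper parametrizes the right lobe explicitly via
\[
x(t)=\frac{\sqrt{2}\cos t}{1+\sin^2 t},\qquad y(t)=\frac{\sqrt{2}\sin t\cos t}{1+\sin^2 t},
\]
differentiates the squared distance $z(t)=(a-x(t))^2+y(t)^2$, and solves $z'(t)=0$ to obtain $\cos t_0=\sqrt{2}(1-\sqrt{1-a^2})/a$, which is feasible precisely when $a\leq 2\sqrt{2}/3$. You instead use the Cartesian identity $|w^2-1|^2=(|w|^2+1)^2-4(\RE w)^2$ and Lagrange multipliers. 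Your computation is arguably cleaner: the reduction $x^2+y^2=(2x-a)/a$, the explicit critical abscissa $x=a/(2\sqrt{1-a^2})$, and the feasibility condition $(1-s)(3s-1)\geq 0$ all drop out with less manipulation than the trigonometric version, and the sanity check $r_a^2/a^2=s/(1+s)<1$ is a nice bonus the paper does not mention. The parametric approach, on the other hand, avoids worrying about the singular point of the constraint at the origin.

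One small point of presentation: in the $R_a$ part you treat $w=0$ and $w=\sqrt{2}$ as ``vertices'' separate from the Lagrange analysis, but they are simply the $y=0$ solutions of the multiplier system (with $w=0$ the node where $\nabla g=0$). So the clean way to finish is to list all three critical values $a^2$, $(\sqrt{2}-a)^2$, $s-s^2$ (the last only when $a\leq 2\sqrt{2}/3$) and compare directly; since $s-s^2\leq 1/4$ while $\max\{a^2,(\sqrt{2}-a)^2\}\geq 1/2$, the off-axis point is never the maximum. That replaces the slightly informal ``it is a minimum, hence not the maximum'' step you flagged as an obstacle.
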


\begin{proof} The equation of the lemniscate of Bernoulli is
  \[(x^2+y^2)^2-2(x^2-y^2)=0\] and the  parametric equations of its right-half is given by
\[x(t)=\frac{\sqrt{2}\cos t}{1+\sin^2t},\qquad y(t)=\frac{\sqrt{2}\sin t\cos t}{1+\sin^2t}, \quad \left(-\frac{\pi}{2} \leq t\leq \frac{\pi}{2} \right).
\]
The square of the distance from the point $(a,0)$ to the points on the lemniscate is given by
\begin{align*}
z (t)& =(a-x(t))^2+(y(t))^2\\
& =a^2+\frac{2(\cos^2t-\sqrt{2}a\cos t)}{1+\sin^2t},
\end{align*}
and its derivative is
\[ z'(t) =2\frac{(-4\cos t+\sqrt{2}a(2+\cos^2t))\sin t}{(1+\sin^2t)^2}.\]
Clearly $z'(t)=0$ if and only if
\[t=0 \quad \text{ or } \quad \cos t=\frac{\sqrt{2}(1\pm \sqrt{1-a^2})}{a}. \] Note that for $a>1$, the numbers
${\sqrt{2}(1\pm\sqrt{1-a^2})}/{a}$ are complex and for $0<a\leq 1$, the number ${\sqrt{2}(1+\sqrt{1-a^2})}/{a}>1$.
For $0 < a <1$, the number ${\sqrt{2}(1-\sqrt{1-a^2})}/{a}$ lies between -1 and 1 if and only if $0 < a \leq 2\sqrt{2}/3$.

Let  us first assume that  $0<a\leq2\sqrt{2}/{3}$ and $t=t_0$ be given by \[\cos t_0=\frac{\sqrt{2}(1-\sqrt{1-a^2})}{a}.\]  Since
  \[\min\{z(\pi/2), z(-\pi/2), z(0), z(t_0)\} = z(t_0),\]
  it follows that $\min \sqrt{z(t)} = \sqrt{z(t_0)}$ .  A calculation shows that
\[  z(t_0) =\sqrt{1-a^2}-(1-a^2).\] Hence
\[  r_a= \min \sqrt{z(t)} = \sqrt{\sqrt{1-a^2}-(1-a^2)}.   \]

Let us next assume that $ {2\sqrt{2}}/{3}\leq a<\sqrt{2}$. In this case, \[\min\{z(\pi/2), z(-\pi/2), z(0)\} = z(0),\]  and thus $z(t)$ attains
its minimum value at $t=0$ and
\[ r_a= \min \sqrt{z(t)}  =\sqrt{2}-a .\]

Now consider  $0<a\leq1/\sqrt{2}$ and $t=t_0$ be given by \[\cos t_0=\frac{\sqrt{2}(1-\sqrt{1-a^2})}{a}.\]  It is easy to see that
\[ \max\{z(\pi/2), z(-\pi/2), z(0), z(t_0)\} = z(0), \] and thus
\[  R_a= \max \sqrt{z(t)} = \sqrt{2}-a.   \] Similarly, for $a\geq 1/\sqrt{2}$,
\[\max\{z(\pi/2), z(-\pi/2), z(0)\} = z(\pi/2),\]
and hence
\[  R_a= \max \sqrt{z(t)} =  a.  \qedhere \]
\end{proof}

Now consider the subclass  $\mathcal{CS}_n(\alpha)$  consisting of close-to-starlike functions of type  $\alpha$ defined by
\[ \mathcal{CS}_n(\alpha):=\left\{  f\in\mathcal{A}_n:  \frac{f}{g} \in \mathcal{P}_n, \quad g\in \mathcal{ST}_n(\alpha)  \right\}.  \]
The $\mathcal{SL}_n$-radius for this class is given in the following theorem.

\begin{theorem}\label{th1}  The $\mathcal{SL}_n$-radius for the class $\mathcal{CS}_n(\alpha)$ is given by
\[R_{\mathcal{SL}_n} (\mathcal{CS}_n(\alpha))=\left( \frac{\sqrt{2}-1}{(1+n-\alpha)+\sqrt{(1+n-\alpha)^2+(1-2\alpha+\sqrt2)(\sqrt2-1)}}\right)^{1/n}\]
This radius is sharp.
\end{theorem}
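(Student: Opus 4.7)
The plan follows the blueprint of Theorem~\ref{th3}, this time splitting $zf'/f$ into two pieces via the factorisation defining $\mathcal{CS}_n(\alpha)$. Write $f=gh$ with $g\in\mathcal{ST}_n(\alpha)$ and $h:=f/g\in\mathcal{P}_n$; the logarithmic derivative gives
\[
\frac{zf'(z)}{f(z)} \;=\; p(z)\,+\,\frac{zh'(z)}{h(z)}, \qquad p(z):=\frac{zg'(z)}{g(z)}\in\mathcal{P}_n(\alpha).
\]
For $|z|=r$, Lemma~\ref{lem3} places $p(z)$ in the closed disk centred at $(1+(1-2\alpha)r^{2n})/(1-r^{2n})$ of radius $2(1-\alpha)r^n/(1-r^{2n})$, and Lemma~\ref{lem2} bounds $|zh'/h|$ by $2nr^n/(1-r^{2n})$. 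The triangle inequality therefore confines $zf'(z)/f(z)$ to the closed disk with centre $a_r:=(1+(1-2\alpha)r^{2n})/(1-r^{2n})$ and radius $\rho_r:=2(1+n-\alpha)r^n/(1-r^{2n})$. A quick monotonicity check in $r^{2n}$ gives $a_r\geq 1>2\sqrt{2}/3$ on $[0,1)$, so the second branch of Lemma~\ref{lem25} applies and the disk lies in the lemniscate region $|w^2-1|<1$ as soon as $\rho_r\leq\sqrt{2}-a_r$. Clearing the common denominator $1-r^{2n}$ and writing $x:=r^n$ reduces this to the quadratic inequality
\[
(1-2\alpha+\sqrt{2})\,x^2 \,+\, 2(1+n-\alpha)\,x \,-\, (\sqrt{2}-1) \;\leq\; 0,
\]
whose positive root, rationalised via the standard conjugate trick, is precisely $R^n$ with $R$ the radius claimed in the theorem.

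For sharpness, take the Ma--Minda extremal $g(z)=z/(1-z^n)^{2(1-\alpha)/n}\in\mathcal{ST}_n(\alpha)$ together with $h(z)=(1+z^n)/(1-z^n)\in\mathcal{P}_n$, so $f=gh\in\mathcal{CS}_n(\alpha)$. A short calculation yields
\[
\frac{zf'(z)}{f(z)} \;=\; \frac{1+2(1+n-\alpha)z^n+(1-2\alpha)z^{2n}}{1-z^{2n}},
\]
and substituting $z=R$ together with the defining quadratic for $R^n$ collapses the numerator to $\sqrt{2}\,(1-R^{2n})$. Hence $zf'(R)/f(R)=\sqrt{2}$, which lies on the lemniscate since $|(\sqrt{2})^2-1|=1$, so equality is attained at $z=R$.

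The only place that requires genuine care is selecting the \emph{correct} branch of Lemma~\ref{lem25}, which reduces to the monotonicity observation that $a_r$ is increasing in $r^{2n}$ with $a_0=1$, hence always lies in $[1,\sqrt{2})$ for the relevant range of $r$. Everything else is the triangle inequality plus the quadratic formula, closely echoing the proof of Theorem~\ref{th3}; the specialisation $\alpha\to 1^{-}$ (in spirit) would strip off the $p$-contribution and recover that earlier computation.
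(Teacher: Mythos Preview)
Your argument is correct and follows essentially the same route as the paper: the decomposition $zf'/f=zg'/g+zh'/h$, the disk estimate with centre $a_r=(1+(1-2\alpha)r^{2n})/(1-r^{2n})\geq 1$ via Lemmas~\ref{lem2}--\ref{lem3}, the application of the $\sqrt{2}-a$ branch of Lemma~\ref{lem25}, the resulting quadratic in $r^n$, and the same extremal pair $g(z)=z/(1-z^n)^{(2-2\alpha)/n}$, $h(z)=(1+z^n)/(1-z^n)$ for sharpness. Your explicit observation that $zf'(R)/f(R)=\sqrt{2}$ is a nice touch that makes the sharpness verification slightly more transparent than the paper's version.
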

\begin{proof}Let $g$ be a starlike function of order $\alpha$ with
$h(z)= {f(z)}/{g(z)}\in \mathcal{P}_n$. Then $zg'(z)/g(z)$ is in $\mathcal{P}_n(\alpha)$ and from Lemma~\ref{lem3},
\begin{equation} \label{eq1} \left|\frac{zg'(z)}{g(z)}-\frac{1+(1-2\alpha)r^{2n}}{1-r^{2n}}\right|\leq\frac{2(1-\alpha)r^n}{1-r^{2n}}.\end{equation}
Applying Lemma~\ref{lem2} yields
\begin{equation} \label{eq2} \left|\frac{zh'(z)}{h(z)}\right|\leq\frac{2nr^n}{1-r^{2n}}.\end{equation}
Now
\begin{equation} \label{eq3}
\frac{zf'(z)}{f(z)} = \frac{zg'(z)}{g(z)}+\frac{zh'(z)}{h(z)},
\end{equation}
and using \eqref{eq1}--\eqref{eq3}, it follows that \begin{equation} \label{eq4}
\left|\frac{zf'(z)}{f(z)}-\frac{1+(1-2\alpha)r^{2n}}{1-r^{2n}}\right|\leq\frac{2(1+n-\alpha)r^n}{1-r^{2n}}.\end{equation} Since the center of
the disk in \eqref{eq4} is greater than 1, from Lemma ~\ref{lem25}, it is seen that the points $w$ are inside the lemniscate $|w^2-1|<1$ if
\[\frac{2(1+n-\alpha)r^n}{1-r^{2n}}\leq\sqrt2-\frac{1+(1-2\alpha)r^{2n}}{1-r^{2n}}. \] The last inequality reduces to $
(1-2\alpha+\sqrt{2})r^{2n} + 2(1+n-\alpha)r^n -(\sqrt{2}-1)\leq0$. Solving this latter inequality results in the value of $R=R_{\mathcal{SL}_n}
(\mathcal{CS}_n(\alpha))$.

The function $f$ given by
\[ f(z) = \frac{z(1+z^n)}{(1-z^n)^{(n+2-2\alpha)/n}} \]
satisfies the hypothesis of Theorem ~\ref{th1} with $g(z)=z/(1-z^n)^{(2-2\alpha)/n}$. It is easy to see that, for $z=R=R_{\mathcal{SL}_n}
(\mathcal{CS}_n(\alpha))$,
\[ \left|\left(\frac{zf'(z)}{f(z)}\right)^2-1\right| =\left| \frac{[1+(1-2\alpha)R^{2n}+2(1+n-\alpha)R^n]^2}{(1-R^{2n})^2}-1 \right| =1 .\]
This shows that the result is sharp.
\end{proof}

For $-1\leq B < A \leq 1$, define the  class
\[\mathcal{ST}_n[A,B]:= \left\{f\in\mathcal{A}_n: \frac{zf'(z)}{f(z)} \in \mathcal{P}_n[A,B]\right\}.\] This is the well-known class of Janowski starlike  functions. For this class, we have the following
results.

\begin{theorem}Let $-1<B < A\leq 1$ and either (i) $ 1+A \leq \sqrt{2}(1+B) $  and $  2\sqrt{2} (1-B^2)\leq 3 (1-AB) <3 \sqrt{2} (1-B^2)$,
or (ii)  $ (A-B)(1-B^2)+(1-B^2)^2\leq  (1-B^2)\sqrt{(1-B^2)-(1-AB)^2 } +(1-AB)^2 $ and $ 2\sqrt{2} (1-B^2)\geq 3 (1-AB)$. Then $\mathcal{ST}_n[A,B]\subset  \mathcal{SL}_n$.
\end{theorem}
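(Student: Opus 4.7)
The plan is to apply Lemma~\ref{lem3} to the function $p(z):=zf'(z)/f(z)\in\mathcal{P}_n[A,B]$ and then invoke Lemma~\ref{lem25}, with the center of the resulting disk in the role of $a$, to force the image of $p$ inside the right-leaf of the lemniscate. By Lemma~\ref{lem3}, for $|z|=r<1$ the values of $p$ lie in the closed disk with center $a(r)=(1-ABr^{2n})/(1-B^2r^{2n})$ and radius $\rho(r)=(A-B)r^n/(1-B^2r^{2n})$. This disk is the image of $\{|w|\leq r^n\}$ under the injective M\"obius map $w\mapsto(1+Aw)/(1+Bw)$, so the target disks are nested and expand with $r$. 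Consequently it suffices to verify the containment at the extremal value $r=1$: that the closed disk with center $a:=(1-AB)/(1-B^2)$ and radius $\rho:=(A-B)/(1-B^2)$ lies in the closed region $\{w:|w^2-1|\leq 1\}$. Since $p(\mathbb{D})$ is an open subset of that closed disk, such a containment automatically promotes to $p(\mathbb{D})\subseteq\{w:|w^2-1|<1\}$, which is exactly $f\in\mathcal{SL}_n$.

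Next, Lemma~\ref{lem25} is applied to $a$. First one notes $a>0$ (from $AB<1$, which holds since $A>B$ and $|A|,|B|\leq 1$) and $1-B^2>0$ (from $-1<B$). In case~(i), the hypothesis $2\sqrt{2}(1-B^2)\leq 3(1-AB)<3\sqrt{2}(1-B^2)$ is precisely $2\sqrt{2}/3\leq a<\sqrt{2}$, so Lemma~\ref{lem25} reduces the disk containment to $\rho\leq\sqrt{2}-a$; substituting, clearing the common denominator, and dividing by $1-B>0$ collapses this cleanly to the stated condition $1+A\leq\sqrt{2}(1+B)$. In case~(ii), the hypothesis $3(1-AB)\leq 2\sqrt{2}(1-B^2)$ places $a\in(0,2\sqrt{2}/3]$, so the relevant bound from Lemma~\ref{lem25} is $\rho^2\leq\sqrt{1-a^2}-(1-a^2)$, and substituting the expressions for $a$ and $\rho$, then clearing denominators, yields the algebraic inequality appearing in the theorem.

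I expect the main obstacle to be the algebra in case~(ii), where a doubly nested square-root condition must be unpacked: one has to verify that both sides are nonnegative before squaring (which amounts to $(1-B^2)^2\geq(1-AB)^2$, equivalent to $a\leq 1$ and automatic under the case hypothesis $a\leq 2\sqrt{2}/3<1$) and then massage the resulting polynomial inequality into the printed form. A minor subtlety is the strict-versus-closed distinction at the lemniscate's boundary, which is handled by the openness of $p(\mathbb{D})$ noted above; the remaining computations in case~(i) are routine once the correct branch of Lemma~\ref{lem25} has been selected.
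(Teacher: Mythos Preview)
Your proposal is correct and follows the paper's approach: confine $zf'(z)/f(z)$ to a disk via Lemma~\ref{lem3}, then invoke Lemma~\ref{lem25} with the branch determined by whether $a=(1-AB)/(1-B^2)$ lies above or below $2\sqrt{2}/3$. The paper records the limiting disk \eqref{th24} directly (skipping your M\"obius nesting justification and the open-versus-closed remark) and dismisses case~(ii) as ``similar,'' but the underlying argument is the same.
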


\begin{proof} Since $\frac{zf'(z)}{f(z)}\in P_n[A,B]$, Lemma~\ref{lem3} gives
\begin{equation}\label{th24}
\left|\frac{zf'(z)}{f(z)}-\frac{1-AB}{1-B^2}\right|\leq\frac{A-B}{1-B^2}\quad (|z|<1).
\end{equation}
Let $a=(1-AB)/(1-B^2)$, and suppose the two conditions in (i) hold.  By multiplying the inequality $ 1+A \leq \sqrt{2}(1+B) $ by the positive
constant $1-B$ and rewriting, it is seen that  the given inequality is equivalent to $A-B \leq \sqrt{2}(1-B^2)- (1-AB)$. A division by $1-B^2$
shows that the condition $ 1+A \leq \sqrt{2}(1+B) $ is equivalent to the condition  $(A-B)/(1-B^2) \leq \sqrt{2}-a$. Similarly, the condition
 $  2\sqrt{2} (1-B^2)\leq 3 (1-AB) <3 \sqrt{2} (1-B^2)$ is equivalent to $2 \sqrt{2}/3 \leq a <\sqrt{2}$. In view of these equivalences,
 it follows from \eqref{th24} that the quantity $w=zf'(z)/f(z)$ lies in the disk  $|w-a| <r_a$ where $r_a= \sqrt{2}-a$.  Since
 $2 \sqrt{2}/3 \leq a <\sqrt{2}$ and $|w-a| <r_a$, Lemma~\ref{lem25} shows that  $|w^2-1|<1$ or
  \[  \left|\left(\frac{zf'(z)}{f(z)}\right)^2-1\right|<1. \] This proves that $f\in \mathcal{SL}_n$. The proof is similar if the conditions
  in (ii) hold, and is therefore omitted.
\end{proof}

\begin{theorem}\label{th2.5} Let $-1\leq  B < A\leq 1$, with $B\leq 0$.    Then the $\mathcal{SL}_n$-radius for the class $\mathcal{ST}_n[A,B]$ is
 \[
R_{\mathcal{SL}_n} \left(\mathcal{ST}_n[A,B]\right) = \min\left(1, \left(\frac{2(\sqrt2-1)}{(A-B)+\sqrt{(A-B)^2+4(\sqrt2B-A)B(\sqrt2-1)}}\right)^{\frac{1}{n}}\right) .
\] In particular, if   $ 1+A < \sqrt{2}(1+B) $, then $ \mathcal{ST}_n[A,B] \subseteq \mathcal{SL}_n$. Also the $ \mathcal{SL}$-radius for the class
consisting of starlike functions is $3-2\sqrt{2}$.
\end{theorem}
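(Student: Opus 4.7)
The plan is to follow the same template as the proof of Theorem~\ref{th1}: use Lemma~\ref{lem3} to locate the values of $zf'(z)/f(z)$ in a disk, then use Lemma~\ref{lem25} to force that disk inside the lemniscate, and finally solve the resulting inequality in $r^n$. Concretely, since $zf'(z)/f(z)\in\mathcal{P}_n[A,B]$, Lemma~\ref{lem3} gives
\[
\left|\frac{zf'(z)}{f(z)}-a\right|\leq \rho,\qquad a:=\frac{1-ABr^{2n}}{1-B^2r^{2n}},\quad \rho:=\frac{(A-B)r^n}{1-B^2r^{2n}}.
\]
The hypothesis $B\leq 0$ (together with $A>B$) is precisely what makes $a\geq 1$, because $a\geq 1\Longleftrightarrow -AB\geq -B^2\Longleftrightarrow B(A-B)\leq 0$. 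In particular $a\geq 1>2\sqrt{2}/3$, so the second branch of Lemma~\ref{lem25} applies and the disk is contained in the right-hand lemniscatic region as soon as $\rho\leq \sqrt{2}-a$.

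Next I would clear denominators in $\rho\leq\sqrt{2}-a$ and rewrite the inequality, with $t:=r^n$, as the quadratic
\[
(\sqrt{2}B-A)B\,t^{2}+(A-B)\,t-(\sqrt{2}-1)\leq 0.
\]
Because $B\leq 0$ and $A>B\geq\sqrt{2}B$, the coefficient $(\sqrt{2}B-A)B$ is $\geq 0$ and the constant term is negative, so the quadratic has one nonpositive and one positive root; the positive root $t_{+}$, after rationalizing the numerator to eliminate division by the possibly vanishing leading coefficient, reads
\[
t_{+}=\frac{2(\sqrt{2}-1)}{(A-B)+\sqrt{(A-B)^{2}+4(\sqrt{2}B-A)B(\sqrt{2}-1)}},
\]
and the inequality holds iff $t\leq t_{+}$, i.e.\ $r\leq t_{+}^{1/n}$. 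Evaluating the quadratic at $t=1$ shows after simplification that $t_{+}\geq 1$ is equivalent to $(1-B)(1+A)\leq\sqrt{2}(1-B^{2})$, i.e.\ $1+A\leq\sqrt{2}(1+B)$; this explains both the $\min(1,\cdot)$ in the statement and the ``in particular'' clause, and specialising to $A=1,\,B=-1,\,n=1$ yields $R=3-2\sqrt{2}$ for the usual starlike class.

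For sharpness I would take the canonical Janowski extremal
\[
f(z)=z\,(1+Bz^{n})^{(A-B)/(nB)}\quad(B\neq 0),\qquad f(z)=z\exp(Az^{n}/n)\quad(B=0),
\]
for which $zf'(z)/f(z)=(1+Az^{n})/(1+Bz^{n})\in\mathcal{P}_{n}[A,B]$. The closed disk $|w-a|\leq \sqrt{2}-a$ touches the lemniscate only at $w=\sqrt{2}$, so the natural thing to check is that at $z=R$ (with $R<1$) the value $(1+AR^{n})/(1+BR^{n})$ equals $\sqrt{2}$; this reduces to $R^{n}=(\sqrt{2}-1)/(A-\sqrt{2}B)$, and a direct algebraic verification (squaring and expanding) shows this is the same number as the $t_{+}$ obtained above. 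Consequently $|(zf'(z)/f(z))^{2}-1|=1$ at $z=R$, establishing sharpness.

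The routine part is the quadratic manipulation; the only genuine obstacles are (i) verifying $a\geq 2\sqrt{2}/3$ so that the cleaner branch of Lemma~\ref{lem25} applies (this is where the hypothesis $B\leq 0$ is essential and cannot be dropped without invoking the more complicated $r_{a}$ from Lemma~\ref{lem25}), and (ii) the bookkeeping needed to show that the compact formula from $t_{+}$ agrees with the touching-point calculation for the extremal function, which is what makes the sharpness argument clean.
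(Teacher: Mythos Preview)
Your proposal is correct and follows essentially the same approach as the paper's proof: apply Lemma~\ref{lem3} to place $zf'(z)/f(z)$ in a disk centered at $a\geq 1$, invoke the $\sqrt{2}-a$ branch of Lemma~\ref{lem25}, solve the resulting quadratic in $r^n$, and verify sharpness with the Janowski extremal $zf'(z)/f(z)=(1+Az^n)/(1+Bz^n)$. In fact you supply several details the paper leaves implicit (the sign analysis of the quadratic's coefficients, the equivalence $t_+\geq 1\Leftrightarrow 1+A\leq\sqrt{2}(1+B)$, and the touching-point verification $(1+AR^n)/(1+BR^n)=\sqrt{2}$ for sharpness).
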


\begin{proof} Since $\frac{zf'(z)}{f(z)}\in P_n[A,B]$, Lemma~\ref{lem3} yields \[\left|\frac{zf'(z)}{f(z)}-\frac{1-ABr^{2n}}{1-B^2r^{2n}}\right|\leq\frac{(A-B)r^n}{1-B^2r^{2n}}.\]
Since $B\leq 0$, it follows that \[ a:=\frac{1-ABr^{2n}}{1-B^2r^{2n}}\geq 1.\] Using Lemma \ref{lem25}, the function $f$ satisfies
 \[ \left|\left(\frac{zf'(z)}{f(z)}\right)^2-1\right|<1  \] provided
\[ \frac{(A-B)r^n}{1-B^2r^{2n}}<\sqrt2- \frac{1-ABr^{2n}}{1-B^2r^{2n}},\]
that is, \[ (\sqrt2B-A)Br^{2n}+(A-B)r^n-(\sqrt2-1)<0. \] Solving the inequality, we get  $r \leq R_{\mathcal{SL}_n}
\left(\mathcal{ST}_n[A,B]\right)$. The result is sharp for the function given by $f(z)= z(1+Bz^n)^\frac{A-B}{nB}$ for $B\neq 0$ and $f(z)=
z\exp(Az^n/n)$ for $B=0$. Such function $f$ satisfies the equation $zf'(z)/f(z)=(1+Az^n)/(1+Bz^n)$, and therefore the function $f\in \mathcal{ST}_n[A,B]$.
\end{proof}

\begin{theorem}Assume that $f\in \mathcal{ST}_n[A,B]$ and $0< B <A \leq 1$.  Let  $R_1$ be given by
\[R_1= \left(\frac{2\sqrt{2}-3}{(2\sqrt{2}B-3A)B }\right)^{1/(2n)}, \] and let $R_2$ be the number $ R_{\mathcal{SL}_n} \left(\mathcal{ST}_n[A,B]\right) $ as
given in Theorem~\ref{th2.5}.   Let $R_3$ be the largest number in $(0,1]$ such that
\[(A-B)r^n(1-B^2r^{2n})+(1-B^2r^{2n})^2-(1-ABr^{2n})^2-\sqrt{(1-B^2r^{2n})^2-(1-ABr^{2n})^2}\leq 0    \]
for all   $ 0\leq r \leq R_3$.  Then the  $ \mathcal{SL}_n$-radius for the class $\mathcal{ST}_n[A,B]$ is given by
 \[
R_{\mathcal{SL}_n} \left(\mathcal{ST}_n[A,B]\right) = \begin{cases}  R_2 & (R_2\leq R_1) \\ R_3 & (R_2 > R_1).\end{cases}\]
\end{theorem}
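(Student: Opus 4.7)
The plan is to build on Theorem~\ref{th2.5} by tracking the center $a(r):=(1-ABr^{2n})/(1-B^{2}r^{2n})$ of the disk furnished by Lemma~\ref{lem3}. Since $A>B>0$ we have $AB>B^{2}$, so $a(r)$ strictly decreases from $a(0)=1$; in particular $a(r)$ eventually falls below the threshold $2\sqrt{2}/3$ that appears in Lemma~\ref{lem25}, and the formula for the maximal inscribed disk-radius $r_{a(r)}$ switches from $\sqrt{2}-a(r)$ to $\sqrt{\sqrt{1-a(r)^{2}}-(1-a(r)^{2})}$. The number $R_{1}$ marks exactly this switch: solving $a(r)=2\sqrt{2}/3$ gives $r^{2n}=(2\sqrt{2}-3)/((2\sqrt{2}B-3A)B)$, whose positive $(2n)$-th root is $R_{1}$ (both numerator and denominator being negative since $3A>3B>2\sqrt{2}B$).

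First I would apply Lemma~\ref{lem3} to obtain $|zf'(z)/f(z)-a(r)|\leq\rho(r)$ with $\rho(r):=(A-B)r^{n}/(1-B^{2}r^{2n})$, exactly as in Theorem~\ref{th2.5}. For $0\leq r\leq R_{1}$ one has $a(r)\geq 2\sqrt{2}/3$, and Lemma~\ref{lem25} reduces the containment $\{|w-a(r)|\leq\rho(r)\}\subset\{|w^{2}-1|<1\}$ to $\rho(r)\leq\sqrt{2}-a(r)$; this is the inequality already solved in the proof of Theorem~\ref{th2.5}, with critical value $r=R_{2}$. For $r>R_{1}$ the center is below $2\sqrt{2}/3$ and Lemma~\ref{lem25} forces the stricter bound $\rho(r)\leq\sqrt{\sqrt{1-a(r)^{2}}-(1-a(r)^{2})}$. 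Squaring this and clearing the common factor $(1-B^{2}r^{2n})^{2}$ rearranges it into precisely the polynomial-plus-radical inequality defining $R_{3}$. A sanity check is that at $r=R_{1}$ both formulae for $r_{a(r)}$ collapse to $\sqrt{2}/3$, so the two regimes glue continuously and the two inequalities are simultaneously tight there.

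The case split is then transparent. If $R_{2}\leq R_{1}$, the constraint $\rho\leq\sqrt{2}-a$ saturates at $r=R_{2}$ while $a\geq 2\sqrt{2}/3$ still holds, so the $\mathcal{SL}_{n}$-radius is $R_{2}$, with sharpness provided by the function $f(z)=z(1+Bz^{n})^{(A-B)/(nB)}$ from Theorem~\ref{th2.5}. If instead $R_{2}>R_{1}$, then $\rho(R_{1})<\sqrt{2}-a(R_{1})$ with strict slack, the stricter $r_{a}$-formula takes over for $r>R_{1}$, and the first $r>R_{1}$ at which the $R_{3}$-inequality fails is the true radius; by continuity this $r$ lies in $(R_{1},R_{2})$ and equals $R_{3}$.

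The main obstacle I expect is the algebraic bookkeeping: carefully turning $\rho^{2}\leq\sqrt{1-a^{2}}-(1-a^{2})$ into the explicit polynomial-plus-radical form defining $R_{3}$, and checking monotonicity of the relevant expression in $r$ so that $R_{3}$ really marks a first-time violation rather than a re-entry. Sharpness in Case~2 should again follow from $f(z)=z(1+Bz^{n})^{(A-B)/(nB)}$ evaluated at $z=R_{3}$, since Lemma~\ref{lem3} is saturated by this function and the bound $\rho=r_{a}$ is in turn saturated by a well-chosen argument of $z$ realising the minimum distance from $a$ to the lemniscate.
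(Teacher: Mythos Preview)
Your plan is essentially the same as the paper's: apply Lemma~\ref{lem3} to place $zf'(z)/f(z)$ in the disk with center $a(r)=(1-ABr^{2n})/(1-B^{2}r^{2n})$ and radius $\rho(r)=(A-B)r^{n}/(1-B^{2}r^{2n})$, identify $R_{1}$ as the value where $a(r)=2\sqrt{2}/3$, and then use the two branches of $r_{a}$ from Lemma~\ref{lem25} on either side of $R_{1}$ to obtain the case split between $R_{2}$ and $R_{3}$, with sharpness coming from the extremal Janowski function. Your treatment of the gluing at $R_{1}$ and the location $R_{3}\in(R_{1},R_{2})$ is in fact more explicit than the paper's very terse argument; the only point to watch is the algebra when you ``square and clear the common factor'' to obtain the displayed $R_{3}$-inequality, since the statement carries the term $(A-B)r^{n}(1-B^{2}r^{2n})$ linearly rather than squared---the paper itself simply asserts the equivalence without computation.
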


\begin{proof}From the proof of the previous theorem,  it easy to see that the quantity $w=zf'(z)/f(z)$ lies in the disk
$|w-a|\leq R$ where  \[ a:=\frac{1-ABr^{2n}}{1-B^2r^{2n}},\quad R= \frac{(A-B)r^n}{1-B^2r^{2n}}.\] Let us first assume that
$R_2 \leq R_1$ where $R_1, R_2$ are as defined in the statement of the theorem. In this case, $r \leq R_1$ if and only if  $a\geq 2\sqrt{2}/3$ and in particular,
for  $0 \leq r \leq R_2$, we have $a\geq 2\sqrt{2}/3$. Lemma \ref{lem25} shows that $f\in \mathcal{SL}_n$ in $|z|\leq r$ if
$R\leq  \sqrt{2}-a $ or equivalently if $r\leq R_2$.

Let us now assume that $R_2>R_1$. In this case, $r \geq R_1$ if and only if  $a\leq 2\sqrt{2}/3$ and in particular for
$r \geq R_2$, we have $a \leq 2 \sqrt{2}/{3}$. Lemma \ref{lem25} shows that $f\in \mathcal{SL}_n$ in $|z|\leq r$ if
$R\leq  \left(\sqrt{1-a^2}-(1-a^2)\right)^{1/2}$ or equivalently if $r\leq R_3$. The sharpness follows because $w=zf'(z)/f(z) $
with $z\in \mathbb{D}$
 fills the entire disk
$|w-a|<R$ where $a$ and $R$ are as given above.
\end{proof}

\section{The $\mathcal{M}_n(\beta)$-Radius Problems}

In this section, we compute the  $\mathcal{M}_n(\beta)$-radii for the classes  $\mathcal{S}_n$  and $\mathcal{CS}_n(\alpha)$.

\begin{theorem}\label{2th2}   The $\mathcal{M}_n(\beta)$-radius of functions in $\mathcal{S}_n$ is given by
\[ R_{\mathcal{M}_n(\beta)}(\mathcal{S}_n) = \left[ \frac{\beta-1}{n+\sqrt{n^2+(\beta-1)^2}}\right]^{1/n}.\]
\end{theorem}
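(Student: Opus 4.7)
The plan is to mirror the structure of the proof of Theorem~\ref{th3}, simply replacing the Bernoulli lemniscate condition by the half-plane condition $\RE w < \beta$ (with $\beta > 1$ so that the class $\mathcal{M}_n(\beta)$ contains the identity and the stated radius is a positive real). Setting $h(z) = f(z)/z$, the hypothesis $f \in \mathcal{S}_n$ means $h \in \mathcal{P}_n$, and the standard logarithmic-derivative identity gives
\[ \frac{zf'(z)}{f(z)} - 1 = \frac{zh'(z)}{h(z)}. \]
Applying Lemma~\ref{lem2} to $h$ yields the disk containment
\[ \left| \frac{zf'(z)}{f(z)} - 1 \right| \leq \frac{2nr^n}{1 - r^{2n}} \qquad (|z| = r < 1), \]
and hence the half-plane estimate $\RE(zf'(z)/f(z)) \leq 1 + 2nr^n/(1-r^{2n})$.

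To guarantee $\RE(zf'(z)/f(z)) < \beta$ throughout the disk $|z| < r$, it suffices that $2nr^n/(1-r^{2n}) \leq \beta - 1$, which rearranges to the quadratic inequality $(\beta-1)r^{2n} + 2nr^n - (\beta-1) \leq 0$ in the variable $r^n$. Solving for the positive root and rationalizing produces
\[ r^n \leq \frac{-n + \sqrt{n^2 + (\beta-1)^2}}{\beta - 1} = \frac{\beta - 1}{n + \sqrt{n^2 + (\beta-1)^2}}, \]
which is precisely the claimed value of $R_{\mathcal{M}_n(\beta)}(\mathcal{S}_n)$.

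For the sharpness direction I would reuse the extremal function from Theorem~\ref{th3}, namely $f(z) = (z + z^{n+1})/(1 - z^n)$. This $f$ lies in $\mathcal{S}_n$ since $f(z)/z = (1+z^n)/(1-z^n)$ maps $\mathbb{D}$ to the right half-plane, and a direct computation yields $zf'(z)/f(z) = 1 + 2nz^n/(1 - z^{2n})$. Taking $z = R$ real and positive, where $R := R_{\mathcal{M}_n(\beta)}(\mathcal{S}_n)$, the defining equation $2nR^n/(1-R^{2n}) = \beta - 1$ gives $zf'(z)/f(z) = \beta$, so the strict inequality $\RE(zf'/f) < \beta$ fails exactly at the boundary; thus no larger radius is possible.

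There is no real obstacle here: the argument is structurally identical to that of Theorem~\ref{th3}, with the disk $|w - 1| < \sqrt{2} - 1$ replaced by the half-plane $\RE w < \beta$, and the only nontrivial step is the rationalization that recasts the positive root of the quadratic into the symmetric form displayed in the statement.
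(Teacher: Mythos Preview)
Your proof is correct and follows essentially the same approach as the paper: both set $h(z)=f(z)/z\in\mathcal{P}_n$, apply Lemma~\ref{lem2} to bound $|zf'(z)/f(z)-1|$, reduce the condition $\RE(zf'/f)<\beta$ to a quadratic in $r^n$, and use the same extremal function $f(z)=z(1+z^n)/(1-z^n)$ for sharpness. If anything, your write-up is more explicit than the paper's in solving the quadratic and verifying the boundary equality.
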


\begin{proof} Since $ h(z) = f(z)/z  \in \mathcal{ P}_n$, Lemma~\ref{lem2} yields
\[ \left|\frac{zf'(z)}{f(z)} -1\right|= \left|\frac{zh'(z)}{h(z)}\right|
\leq \frac{2nr^{n}}{1-r^{2n}}. \] Therefore
\[ \RE \frac{zf'(z)}{f(z)} \leq  \frac{1+2nr^n-r^{2n}}{1-r^{2n}} \leq  \beta  \]
for $r \leq R_{\mathcal{M}_n(\beta)}(\mathcal{S}_n)  $.

     The result is sharp for the function
\[ f(z)  =  \frac{z(1  +  z^{n})}{1  -  z^{n}} \]
which  satisfies  the hypothesis   of Theorem~\ref{2th2}.
\end{proof}

For the class $\mathcal{CS}_n(\alpha)$, the following radius is obtained.

\begin{theorem}\label{2th1}  The $\mathcal{M}_n(\beta)$-radius of functions in $\mathcal{CS}_n(\alpha)$ is given by
\[ R_{\mathcal{M}_n(\beta)}(\mathcal{CS}_n(\alpha)) =\frac{\beta-1}{(1+n-\alpha)+\sqrt{(1+n-\alpha)^{2}+(\beta-1)(1+\beta-2\alpha)}}.\]
\end{theorem}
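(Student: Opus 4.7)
The plan is to mimic the argument of Theorem~\ref{th1} verbatim up to the point where the containment condition is imposed, then replace the lemniscate-containment step by the half-plane containment $\RE w<\beta$. Specifically, writing $h=f/g$ with $g\in\mathcal{ST}_n(\alpha)$ and $h\in\mathcal{P}_n$, the identity $zf'(z)/f(z)=zg'(z)/g(z)+zh'(z)/h(z)$, combined with Lemmas~\ref{lem2} and~\ref{lem3} applied to $h$ and $zg'/g$ respectively, immediately yields estimate \eqref{eq4}:
\[
\left|\frac{zf'(z)}{f(z)}-\frac{1+(1-2\alpha)r^{2n}}{1-r^{2n}}\right|\leq\frac{2(1+n-\alpha)r^n}{1-r^{2n}}\qquad(|z|=r).
\]
The disk appearing here is centered on the positive real axis, so its rightmost point gives the desired upper bound on $\RE(zf'(z)/f(z))$.

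Next I would require that this rightmost point is at most $\beta$, i.e.
\[
\frac{1+(1-2\alpha)r^{2n}}{1-r^{2n}}+\frac{2(1+n-\alpha)r^n}{1-r^{2n}}\leq\beta,
\]
which, after clearing denominators (valid since $r<1$), becomes the quadratic inequality
\[
(1-2\alpha+\beta)\,t^{2}+2(1+n-\alpha)\,t-(\beta-1)\leq 0, \qquad t=r^{n}.
\]
Solving for the positive root and rationalizing the numerator by multiplying through by $(1+n-\alpha)+\sqrt{(1+n-\alpha)^{2}+(\beta-1)(1+\beta-2\alpha)}$ collapses the denominator to $(1-2\alpha+\beta)$ times this same square-root expression, and the factor $(1-2\alpha+\beta)$ cancels with its appearance in the numerator $(\beta-1)(1-2\alpha+\beta)$. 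This produces the stated closed form for $R_{\mathcal{M}_n(\beta)}(\mathcal{CS}_n(\alpha))$.

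For sharpness, I would recycle the extremal pair from Theorem~\ref{th1}, namely $g(z)=z/(1-z^n)^{(2-2\alpha)/n}$ and $f(z)=z(1+z^n)/(1-z^n)^{(n+2-2\alpha)/n}$, so that $h(z)=f(z)/g(z)=(1+z^n)/(1-z^n)\in\mathcal{P}_n$. A direct logarithmic differentiation gives
\[
\frac{zf'(z)}{f(z)}=\frac{1+2(1+n-\alpha)z^{n}+(1-2\alpha)z^{2n}}{1-z^{2n}},
\]
and evaluating at the positive real point $z=R:=R_{\mathcal{M}_n(\beta)}(\mathcal{CS}_n(\alpha))$ reproduces exactly the boundary case of the quadratic inequality above, giving $\RE(Rf'(R)/f(R))=\beta$.

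The main obstacle is purely algebraic: verifying the rationalization that converts the raw quadratic root into the form stated in the theorem, and checking in the sharpness step that the identities collapse as expected. There is no analytic subtlety beyond what was already handled in Theorem~\ref{th1}, because the half-plane $\RE w<\beta$ is convex and contains the relevant disk precisely when its rightmost point satisfies $\RE w\leq\beta$, so unlike the lemniscate case of Lemma~\ref{lem25} no case split on the position of the center is needed.
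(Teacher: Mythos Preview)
Your proposal is correct and follows essentially the same argument as the paper: the paper too writes $h=f/g$, applies Lemmas~\ref{lem2} and~\ref{lem3} to obtain the disk estimate, reads off the rightmost real point as an upper bound for $\RE(zf'(z)/f(z))$, and verifies sharpness with the identical extremal pair $f(z)=z(1+z^n)/(1-z^n)^{(n+2-2\alpha)/n}$, $g(z)=z/(1-z^n)^{(2-2\alpha)/n}$. The only difference is cosmetic: you spell out the quadratic and the rationalization that produces the closed form, while the paper simply asserts the resulting radius.
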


\begin{proof} Define the function $h$ by
\[ h(z) := \frac{f(z)}{g(z)}. \]
Then $h \in\mathcal{P}_n$ and  by    Lemma~\ref{lem2},
\begin{equation}\label{2th1e1} \left|\frac{zh'(z)}{h(z)}\right| \leq \frac{2nr^n}{1-r^{2n}} .\end{equation}
 Since $g\in \mathcal{ST}_n(\alpha)$, it follows that
  ${zg'(z)}/{g(z)}$ is in $\mathcal{P}_n(\alpha)$
and  therefore, by    Lemma~\ref{lem3},
\begin{equation}\label{2th1e2} \left|\frac{zg'(z)}{g(z)} - \frac{1+(1-2\alpha) r^{2n}}{1-r^{2n}}\right|
 \leq \frac{2(1-\alpha)r^n}{1-r^{2n}}. \end{equation}
Since
\[ \frac{zf'(z)}{f(z)} =  \frac{zg'(z)}{g(z)}+\frac{zh'(z)}{h(z)}, \]
in view of (\ref{2th1e1}) and (\ref{2th1e2}),  it is seen that
\begin{equation*}\label{2eq6}
 \left|\frac{zf'(z)}{f(z)} -
\frac{1+(1-2\alpha)r^{2n}}{1-r^{2n}}\right| \leq \frac{2(1+n-\alpha)r^n}{1-r^{2n}}. \end{equation*}

This represents a circular disk intersecting the real axis at
$$ x_0 = \frac{1-2(1+n-\alpha)r^n+(1-2\alpha)r^{2n}}{1-r^{2n}}
\mbox{ and }  x_1 = \frac{1+2(1+n-\alpha)r^n+(1-2\alpha)r^{2n}}{1-r^{2n}}, $$ and therefore
\[ \RE \frac{zf'(z)}{f(z)} \leq  \frac{1+2(1+n-\alpha)r^n+(1-2\alpha)r^{2n}}{1-r^{2n}} \leq  \beta  \]
for $r \leq R$.

The  function
\begin{equation*} \label{fn1}
 f(z) = \frac{z(1 + z^n)}{(1 -z^n)^{(n+2-2\alpha)/n}} \end{equation*}
satisfies  the  hypothesis  of  Theorem~\ref{2th1} with \[ g(z) =\frac{z}{(1-z^n)^{(2-2\alpha)/n}}.\] Since
\[ \frac{zf'(z)}{f(z)} = \frac{1+2(1+n-\alpha)z^n+(1-2\alpha)z^{2n}}{1-z^{2n}}=\beta \]
for $z=R= R_{\mathcal{M}_n(\beta)}(\mathcal{CS}_n(\alpha)) $, the result is sharp.
\end{proof}

\end{document}